\documentclass[12pt]{article}

\usepackage{amsmath, amsthm, amssymb, enumerate}
\usepackage{color}
\usepackage{datetime}
\usepackage{fancyhdr}
\chardef\coloryes=1 
\chardef\isitdraft=1 

\ifnum\isitdraft=1 
 \textwidth 16truecm \textheight 8.4in\oddsidemargin0.2truecm\evensidemargin0.7truecm\voffset.9truecm
   \def\version{9} 
   \def\eqref#1{({\ref{#1}})}                




\definecolor{labelkey}{gray}{.3}

\definecolor{refkey}{rgb}{.3,0.3,0.3}



\else

  \def\startnewsection#1#2{\section{#1}\label{#2}\setcounter{equation}{0}}   
  \textwidth 16truecm \textheight 8.4in\oddsidemargin0.2truecm\evensidemargin0.7truecm\voffset0truecm
  \def\nnewpage{} 
\fi

\begin{document}
\def\ques{{\cor \underline{??????}\cob}}
\def\nto#1{{\coC \footnote{\em \coC #1}}}
\def\fractext#1#2{{#1}/{#2}}
\def\fracsm#1#2{{\textstyle{\frac{#1}{#2}}}}   
\def\nnonumber{}


\def\cor{{}}
\def\cog{{}}
\def\cob{{}}
\def\coe{{}}
\def\coA{{}}
\def\coB{{}}
\def\coC{{}}
\def\coD{{}}
\def\coE{{}}
\def\coF{{}}

\ifnum\coloryes=1

  \definecolor{coloraaaa}{rgb}{0.1,0.2,0.8}
  \definecolor{colorbbbb}{rgb}{0.1,0.7,0.1}
  \definecolor{colorcccc}{rgb}{0.8,0.3,0.9}
  \definecolor{colordddd}{rgb}{0.0,.5,0.0}
  \definecolor{coloreeee}{rgb}{0.8,0.3,0.9}
  \definecolor{colorffff}{rgb}{0.8,0.3,0.9}
  \definecolor{colorgggg}{rgb}{0.5,0.0,0.4}

 \def\cog{\color{colordddd}}
 \def\cob{\color{black}}
 \def\cor{\color{red}}
 \def\coe{\color{colorgggg}}

 \def\coA{\color{coloraaaa}}
 \def\coB{\color{colorbbbb}}
 \def\coC{\color{colorcccc}}
 \def\coD{\color{colordddd}}
 \def\coE{\color{coloreeee}}
 \def\coF{\color{colorffff}}
 \def\coG{\color{colorgggg}}

\fi
\ifnum\isitdraft=1
   \chardef\coloryes=1 
   \baselineskip=17pt
\pagestyle{myheadings}
\reversemarginpar

\def\const{\mathop{\rm const}\nolimits}  
\def\diam{\mathop{\rm diam}\nolimits}    

 \def\llabel#1{\label{#1}{\ \mbox{\rm\color{red} {#1}\color{black}}}}

\def\rref#1{{\ref{#1}{\rm \tiny \fbox{\tiny #1}}}}
\def\theequation{\fbox{\bf \thesection.\arabic{equation}}}
\def\ccite#1{{\cite{#1}{\rm \tiny ({#1})}}}

\def\startnewsection#1#2{\newpage\cog \section{#1}\cob\label{#2}

\setcounter{equation}{0}
\pagestyle{fancy}

\lhead{\cob Section~\ref{#2}, #1 }
\cfoot{}
\rfoot{\thepage}
\lfoot{\cob{\today,~\currenttime}~(c75-iklt2, Version~\fbox{\version})}}
\chead{}
\rhead{\thepage}
\def\nnewpage{\newpage}

\newcounter{startcurrpage}
\newcounter{currpage}

\def\llll#1{{\rm\tiny\fbox{#1}}}
   \def\blackdot{{\color{red}{\hskip-.0truecm\rule[-1mm]{4mm}{4mm}\hskip.2truecm}}\hskip-.3truecm}
   \def\bdot{{\coC {\hskip-.0truecm\rule[-1mm]{4mm}{4mm}\hskip.2truecm}}\hskip-.3truecm}
   \def\purpledot{{\coA{\rule[0mm]{4mm}{4mm}}\cob}}
   \def\pdot{\purpledot}
\else
   \baselineskip=15pt
   \def\blackdot{{\rule[-3mm]{8mm}{8mm}}}
   \def\purpledot{{\rule[-3mm]{8mm}{8mm}}}
   \def\pdot{}
\fi

\def\tdot{\fbox{\fbox{\bf\tiny I'm here; \today \ \currenttime}}}
\def\nts#1{{\hbox{\bf ~#1~}}} 
\def\nts#1{{\cor\hbox{\bf ~#1~}}} 
\def\ntsf#1{\footnote{\hbox{\bf ~#1~}}} 
\def\ntsf#1{\footnote{\cor\hbox{\bf ~#1~}}} 
\def\bigline#1{~\\\hskip2truecm~~~~{#1}{#1}{#1}{#1}{#1}{#1}{#1}{#1}{#1}{#1}{#1}{#1}{#1}{#1}{#1}{#1}{#1}{#1}{#1}{#1}{#1}\\}
\def\biglineb{\bigline{$\downarrow\,$ $\downarrow\,$}}
\def\biglinem{\bigline{---}}
\def\biglinee{\bigline{$\uparrow\,$ $\uparrow\,$}}

\newtheorem{Theorem}{Theorem}[section]
\newtheorem{Corollary}[Theorem]{Corollary}
\newtheorem{Proposition}[Theorem]{Proposition}
\newtheorem{Lemma}[Theorem]{Lemma}
\newtheorem{Remark}[Theorem]{Remark}
\newtheorem{Example}[Theorem]{Example}
\newtheorem{Claim}[Theorem]{Claim}
\newtheorem{Assumption}[Theorem]{Assumption}
\newtheorem{definition}{Definition}[section]
\def\theequation{\thesection.\arabic{equation}}
\def\endproof{\hfill$\Box$\\}
\def\square{\hfill$\Box$\\}
\def\comma{ {\rm ,\qquad{}} }            
\def\commaone{ {\rm ,\qquad{}} }         
\def\dist{\mathop{\rm dist}\nolimits}    
\def\sgn{\mathop{\rm sgn\,}\nolimits}    
\def\Tr{\mathop{\rm Tr}\nolimits}    
\def\div{\mathop{\rm div}\nolimits}    
\def\supp{\mathop{\rm supp}\nolimits}    
\def\divtwo{\mathop{{\rm div}_2\,}\nolimits}    
\def\re{\mathop{\rm {\mathbb R}e}\nolimits}    
\def\div{\mathop{\rm{Lip}}\nolimits}   
\def\indeq{\qquad{}}                     
\def\period{.}                           
\def\semicolon{\,;}                      

\title{Dynamic optimal contract under parameter uncertainty with risk averse agent and principal}
\author{Kerem U\u{G}URLU}
\maketitle

\date{}

\begin{center}
\end{center}

\medskip

\indent Department of Applied Mathematics, University of Washington, Seattle, WA 98195\\
\indent e-mails:keremu@uw.edu

\begin{abstract}
We consider a continuous time Principal-Agent model on a finite time horizon, where we look for the existence of an optimal contract both parties agreed on. Contrary to the main stream, where the principal is modelled as risk-neutral, we assume that both the principal and the agent have exponential utility, and are risk averse with same risk awareness level. Moreover, the agent's quality is unknown and modeled as a filtering term in the problem, which is revealed as time passes by. The principal can not observe the agent's real action, but can only recommend action levels to the agent. Hence, we have a \textit{moral hazard} problem. In this setting, we give an explicit solution to the optimal contract problem.     
\end{abstract}


\noindent\thanks{\em Keywords: \/}dynamic  principal agent problem, moral hazard, optimal control.

\section{Introduction}

We consider optimal contracting between two parties, principal (``she") and agent (``he") in continuous time, when agent's actual effort can not be observed by the principal. In economics, this type of problems is called ``hidden action" or ``moral hazard" problem, where the agent's control of the drift of the output process can not be contracted upon. To give an example for the moral hazard problem, we can consider a scenario, where the investor (``principal) hires a portfolio manager (``agent") to manage her savings. The investor can not observe the actual effort (or action) of the portfolio manager but only the current wealth of the portfolio. Hence, in case the investor is not satisfied about the performance of the portfolio manager, the manager could blame the market and argue he gave the best performance for her savings, since the investor can not observe the actual efforts of her portfolio manager, anyway.

The seminal paper on the continuous time principal-agent problem is \cite{HM87}, where both parties have exponential utilities and agree on a linear optimal contract. Their results are generalized and extended by several authors, ( see e.g. \cite{SS93, SS97, HS02, CWZ09, CZ07, M98, M00} among others). A nice survey of the literature is provided by Sung in \cite{S01}. Recently, \cite{CDT16} has considered a general formulation of the principal-agent problem with a lump-sum payment on a finite horizon, where the agent influences both the agent and the volatility of the output, where the proofs use techniques based on Backward Stochastic Differential Equations approach to non-Markovian stochastic control.  

In another seminal paper, \cite{S08} works in continuous time moral hazard model with infinite horizon and the payments are paid continuously, rather than as a lump sum payment at the terminal time. In \cite{S08}, the principal is risk neutral and the agent is risk averse and the agent only controls the drift of the output. \cite{PJ15} extends \cite{S08} to the case of unobserved drift and makes use of the Stochastic Maximum Principle. \cite{N14} solves a principal-agent moral hazard problem, where both the principal and the agent are risk averse, the payments are continuous, and the agent controls the unobserved drift term of the output. The main difference in the contract between a model with a risk averse and a risk neutral principal is whether there is an ongoing consumption and dividends, as would be the case with a risk aversion, or only ``lumpy" consumption and dividends which would be the case with risk-neutrality. While most of the literature focuses on a risk neutral principal, there are exceptions like the seminal work \cite{HM87} and the recent work \cite{N14}. The current manuscript is another work in this direction with a risk averse principal.

In this paper, we solve a moral hazard problem in continuous time Brownian model, where there is an additional endogenous learning term representing agent's unknown quality added to the model. To represent unknown quality of the agent, we follow the framework introduced in \cite{PJ15}. However, we do not assume that the principal is risk-neutral as in \cite{PJ15}, but instead both the principal and the agent have exponential utility with same risk-awareness level as in \cite{N14}. Our model is both quantitatively and qualitatively different from the one in \cite{PJ15} and from \cite{N14}. In the risk neutral case as in \cite{PJ15}, contracting is profitable, because the principal can extract profits by providing insurance to the agent. In our case, combined with the unknown quality of the agent, the interaction is more complicated. We can not conclude directly that the economic benefits will decrease as the difference in risk aversion between the principal and the agent shrinks, since we don't know the quality of the agent a priori, but his quality is revealed with time. The same obstacle has been observed in \cite{PJ15}, where their results hold conditioned enough time has passed to conclude qualitative results about the model. In our model, we see that as the unknown quality of the agent is revealed with time such that it does not affect the model significantly, our model converges to the analogous findings without the parameter uncertainty. On the other hand, even though the risk awareness if the principal is taken into consideration in \cite{N14}, there is no endogenous learning term, hence there are no aforementioned complications related to it. Furthermore, we also show that both parties agree on a contract, where the agent gives full effort from the beginning of the contract until the horizon $T$. Hence, neither the risk awareness of the principal nor the unknown quality of the agent do not affect the agent's actual effort level given throughout the model, even though the principal can not observe it, which was to be observed in \cite{PJ15}, whereas \cite{N14} focused on interior optimal efforts. 

The rest of the paper is as follows. In Section 2, we outline the general model of the problem. In Section 3, we describe the agent's problem and find the dynamics of the continuation value function of the agent. In Section 4, we solve the principal's optimal control problem, describe the optimal contract, and in Section 5 we further discuss and elaborate our main results and conclude the paper.  
\section{The Model}
In this section, we give the framework and dynamics of the model. Let $\{W_t\}_{t \geq 0}$ be a standard Brownian motion on a probability space $(\Omega,\mathcal{F},\mathbb{P})$, where $\mathcal{F}_t$ is generated by the Brownian motion $W_t$. As in \cite{PJ15}, we assume that the cumulative output $y_t$ up to time $T$ satisfies the stochastic integral equation
\begin{equation}
\label{main_eqn}
y_t = \int_0^t \big( \eta + a_s \big) ds + \int_0^t \sigma dW_s,
\end{equation}
for $0 \leq t \leq T$, where $\eta$ stands for the quality of the agent and is denoted by $\eta$, and $a_t \in [0,M]$ for $0 \leq t \leq T$ is the effort provided by the agent. $\eta$ is unknown and we model it as in \cite{PJ15} with the common prior being normal with mean $m_0$ and precision $h_0$. Posterior over $\eta$, denoted by $\hat{\eta}$, depends on $y_t$ and on cumulative effort $A_t \triangleq \int_0^t a_s ds$. Conditional on $(y_t,A_t,t)$, posterior belief about $\eta$ is also normal with
\begin{align}
\label{eqn22}
\hat{\eta}(y_t - A_t,t) &\triangleq \mathbb{E}_t[\eta| y_t,A_t] \\
\hat{\eta}(y_t - A_t,t) &\triangleq \frac{h_0m_0 + \sigma^{-2}(y_t -A_t)}{h_t}
\nonumber \\
h_t &\triangleq h_0 + \sigma^{-2}t
\nonumber \\
\hat{\eta}(0,0) &= m_0 \nonumber.
\end{align}
The principal does not observe the agent's effort, but can only recommend actions $\hat{\bar{a}}$. We denote the filtration generated by output and recommended actions $(\bar{y},\hat{\bar{a}})$ as 
\begin{equation}
\mathcal{F}_t^y \triangleq \sigma(y_s, \hat{\bar{a}}_s; 0\leq s\leq t) 
\end{equation}
and $\mathbb{F}^y \triangleq \{ \mathcal{F}_t^y \}_{t \geq 0}$, the $\mathbb{P}$-augmentation of this filtration. As in \cite{PJ15}, we take the utility function of the agent with $\lambda \in (0,1), \theta > 0$ as
\begin{equation}
\label{eqn25}
u(w,a) = -e^{-\theta w + \theta \lambda a}
\end{equation}
and actions of the agent are limited in a compact set $a_t \in [0,M]$ for $0 \leq t \leq T$. 
On the other hand, the agent knows the actual level of effort $\bar{a}$, which only he knows. Hence, the agent's information is more than the principal. We denote the filtration generated by output, recommended  actions and actual actions up to time $t$ as $(\bar{y},\bar{a}^*)$ as 
\begin{equation}
\mathcal{F}_t^a \triangleq \sigma(y_s, \bar{a}_s, \bar{a}^*; 0\leq s\leq t)
\end{equation}
and $\mathbb{F}^a \triangleq \{ \mathcal{F}_t^a \}_{0 \leq t \leq T}$, the $\mathbb{P}$-augmentation of this filtration. The agent is restricted to the class of control processes $\mathcal{A} \triangleq \{ a_t: [0,T] \times \Omega \rightarrow [0,M] \}$ that are $\mathbb{F}^a$-predictable. 
We work with the induced distributions on the space of continuous functions. We take the sample space $\Omega$ to be the space of all continuous paths $C[0,T]$ equipped with the supremum norm $\lVert \cdot \rVert_{\infty}$. On $C[0,T]$, we let $W_t^0 = \omega(t)$ be the family of coordinate functions and $\mathcal{F}_t^0 = \sigma\{ W_s^0, s \leq t \}$ the filtration generated by $W_t^0$. We denote by $P^0$ the corresponding Wiener measure on $(\Omega, \mathcal{F}_t^0)$ and let $\mathcal{F}_t$ be the completion with the null sets of $\mathcal{F}_T^0$. On this space, we define the corresponding Brownian motion $W_t^0$ as in Equation \eqref{main_eqn}. The set of admissible contracts $\mathcal{C}$ is the set of $\mathbb{F}^Y \triangleq \{ \mathcal{F}_t^Y\}_{0 \leq t \leq T}$ predictable functions $(a_t,w_t): [0,T] \times Y \rightarrow A \times W$. Hence, the contract specifies a wage $\bar{w}_t$ and a recommended action $\bar{a}_t$ at date $t$ that depend on the whole past history of the output $\bar{y}_t$. Then, given a contract, the agent takes his own choice of action $a_t$ at each time $t$. Thus, the set of admissible actions $\mathcal{A}$ for the agent are those $\mathcal{F}_t^a$-predictable functions $(\bar{a},\bar{w}): [0,T] \times Y \rightarrow A \times W$. 
\begin{definition}\label{def21} A contract is called implementable if the agent agrees to the contract at time zero and chooses the recommended actions: $(a^*,w^*) = (\bar{a},\bar{w})$.
\end{definition}
The dependence on the whole past implies that we can not use a direct approach to the agent's problem, since the entire past history $\bar{y}$ would be a state variable. To overcome this difficulty, we make the problem tractable as in \cite{B73,B78},\cite{CWZ09}, \cite{N14} by taking the key state variable to be the density of the output process rather than the output process itself. By considering different action choices corresponding to different output processes, we take the relative density process $\Gamma_t$, as defined in Equation \eqref{eqn211}. For $\sigma > 0$, we define
\begin{equation}
\label{eqn17}
dy_t = \sigma dW_t^0,
\end{equation}
where $y_0$ is given. This is the evolution of output under an effort policy $\bar{a}^0$, which makes the drift of output equal to zero at each time $t \in [0,T]$. Hence, different effort choices alter the evolution of output by changing the distribution over outcomes in $y$ with their corresponding $\Gamma_t$. 
\section{The Agent's Problem} We impose a terminal date $T$ on the contracting horizon. Until time $T$, both the principal and the agent are committed to the contract. To have an incentive compatible contract, we need to specify what action the agent would choose when facing a given contract. First, we assume the following assumption.
The agent's continuation value is 
\begin{equation}
\label{eqn38}
v(a,t) \triangleq \mathbb{E}\bigg[ \int_t^T e^{-\rho(s-t)}u(w_s,a_s)ds + e^{-\rho(T-t)}g(w_T) | \mathcal{F}_t^a \bigg],
\end{equation} where $\bar{y}_t \triangleq \{ y_s; 0 \leq s \leq t \}$ is the output history, $\rho \in (0,1)$ is the constant discount rate and the functions $u$ and $g$ are defined to be below. The history dependence on the past makes it necessary to change the relevant state variable.
We denote 
\begin{equation}
f(t,\bar{y},a_t) = \hat{\eta} (y_t - A_t,t) + a_t, 
\end{equation}
where 
\begin{align} 
A_t &= \int_0^t a_s ds,
\end{align}
here we recall that $\bar{y}_t$ means $y$ depends on the whole path. We denote the density depending on action $\bar{a}$ of $\mathcal{F}_t$-predictable processes:
\begin{equation}
\label{eqn211}
\Gamma_t(\bar{a}) = \exp \big( \int_0^t \sigma^{-1}f(s,\bar{y},a_s)dW_s^0 - \frac{1}{2}\int_0^t \lvert \sigma^{-1}f(s,\bar{y},a_s) \rvert^{2} ds \big),
\end{equation}
where $W_t^0$ is as defined in Equation \eqref{eqn17}. $\Gamma_t$ is an $\mathcal{F}_t$-martingale (as the assumptions on $f$ ensures that Novikov's condition is satisfied) with $\mathbb{E}[\Gamma_T(\bar{a}) ] = 1$ for all $\bar{a} \in \mathcal{A}$ where $\mathcal{A}$ stands for the set of admissible actions. Thus, by Girsanov theorem, we define a new measure $P^{\bar{a}}$ via:
\begin{equation}
\frac{dP^{\bar{a}}}{dP^0} = \Gamma_T(\bar{a}),
\end{equation}
and  by the filtering theorem of Fujisaki \cite{F72}, the process $W_t^{\bar{a}}$ is defined by 
\begin{equation}
W_t^{\bar{a}} = W_t^0 - \int_0^t \bar{\sigma}^{-1}f(s,\bar{y},a_s)ds
\end{equation}
is a Brownian motion under $P_{\bar{a}}$. Thus we have 
\begin{align}
dy_t &= \sigma dW_t^0 \\
&= \sigma [ dW_t^{\bar{a}} + \sigma^{-1}f(t,\bar{y},a_t)dt ] \\
&= f(t,\bar{y},a_t)dt + \sigma dW_t^{\bar{a}}
\end{align}
Hence each effort choice $\bar{a}$ results in a different Brownian motion $W_t^a$. $\Gamma_t$ defined above satisfies $\Gamma_t = \mathbb{E}[ \Gamma_T |\mathcal{F}_t^{\bar{a}}]$. Moreover, via derivation as in \cite{PJ15} by Ito lemma we have $\hat{\eta}$ is a $P^{\bar{a}}$-martingale with decreasing variance 
\begin{equation}\label{filt_mart}
d\hat{\eta}(y_t -A_t,t) = \frac{\sigma^{-1}}{h_t}dW_t^{\bar{a}}
\end{equation}
Using the state variable as the density process $\Gamma_t$, we rewrite the optimization problem as 
\begin{equation}  
v(a,t) = \mathbb{E}_t^0\bigg[ \int_t^T \Gamma_{s,T}^a e^{-\rho(s-t)}u(w_s),a_s)ds + e^{-\rho(T-t)}g(w_T) | \mathcal{F}_t^0 \bigg],
\end{equation}
where the terminal value function being $g(w_T) = -e^{\frac{1-\rho}{r}-\lambda \theta r w_T}$ of the agent is to be derived below.
This approach makes our optimization problem as tractable with optimal control techniques. The agent's problem then is to solve 
\begin{equation}
v^*(t) = \sup_{\bar{a} \in \mathcal{A}}v(\bar{a},\bar{w}).
\end{equation}
\begin{Theorem} \label{hamilton} For each fixed action process $a(\cdot)$, there exists a unique decomposition for the agent's continuation value Equation \eqref{eqn38} that satisfies
\begin{align}
\label{decomposition}
dv_t &= [\rho v_t - u(w_t,a_t)]dt + \sigma \gamma_t^a dW_t^{\bar{a}} \\
v_T &= g(w_T),
\end{align}
for some square integrable process $\gamma_t^a$, namely $\mathbb{E}^a[\int_0^T (\gamma_t^a)^2 dt] < \infty$. The process $\gamma_t^a$ is denoted as ``incentive compatibility parameter" in moral hazard literature (see e.g. \cite{CZ12}).  
\end{Theorem}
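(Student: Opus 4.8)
The plan is to recognize the asserted decomposition as an instance of the martingale representation theorem applied under the measure $P^{\bar{a}}$. The key observation is that the full discounted payoff, running utility already accumulated plus the continuation value, is a $P^{\bar{a}}$-martingale, and representing it as a stochastic integral against $W_t^{\bar{a}}$ is precisely what produces the process $\gamma_t^a$. Throughout I fix the action $a(\cdot)$ and work under $P^{\bar{a}}$, where $W_t^{\bar{a}}$ is a Brownian motion and the continuation value \eqref{eqn38} is evaluated.

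First I would introduce, for the fixed admissible $a(\cdot)$, the process
\begin{equation}
M_t \triangleq \mathbb{E}^a\bigg[ \int_0^T e^{-\rho s}u(w_s,a_s)ds + e^{-\rho T}g(w_T) \,\big|\, \mathcal{F}_t^a \bigg].
\end{equation}
Since the bracketed random variable is fixed and (as I argue below) integrable, $M_t$ is a $P^{\bar{a}}$-martingale for $\mathbb{F}^a$. Pulling the $\mathcal{F}_t^a$-measurable term $\int_0^t e^{-\rho s}u(w_s,a_s)ds$ out of the conditional expectation and using \eqref{eqn38} together with the tower property gives the identity
\begin{equation}
M_t = \int_0^t e^{-\rho s}u(w_s,a_s)ds + e^{-\rho t}v_t.
\end{equation}
Because under $P^{\bar{a}}$ the filtration $\mathbb{F}^a$ is generated (up to augmentation) by $W_t^{\bar{a}}$, the martingale representation theorem yields a unique square-integrable $z_t$ with $M_t = M_0 + \int_0^t z_s\,dW_s^{\bar{a}}$. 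Differentiating the identity gives $dM_t = e^{-\rho t}u(w_t,a_t)dt + d(e^{-\rho t}v_t)$, and since $d(e^{-\rho t}v_t) = -\rho e^{-\rho t}v_t\,dt + e^{-\rho t}dv_t$, solving for $dv_t$ produces
\begin{equation}
dv_t = [\rho v_t - u(w_t,a_t)]dt + e^{\rho t}z_t\,dW_t^{\bar{a}}.
\end{equation}
Setting $\gamma_t^a \triangleq \sigma^{-1}e^{\rho t}z_t$ reproduces the claimed dynamics, and the terminal condition $v_T = g(w_T)$ follows at once from $\mathcal{F}_T^a$-measurability of $g(w_T)$.

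Uniqueness and square integrability are then the remaining points. Square integrability of $\gamma^a$ follows from that of $z$ (guaranteed by the representation theorem), since $e^{\rho t}$ is bounded on $[0,T]$ and $\sigma$ is a constant. For uniqueness I would note that in any decomposition of the stated form the finite-variation part is forced to equal $[\rho v_t - u(w_t,a_t)]dt$ by the structure of $v_t$, so two such decompositions can differ only in their martingale parts; their difference is then a continuous local martingale of finite variation, hence a.s.\ constant, and the uniqueness clause of the representation theorem forces the two integrands to coincide $dt\otimes dP^{\bar{a}}$-almost everywhere. The step I expect to be the main obstacle is verifying the integrability hypotheses needed to invoke the representation theorem: with exponential utility $u(w,a)=-e^{-\theta w+\theta\lambda a}$ and terminal reward $g(w_T)=-e^{\frac{1-\rho}{r}-\lambda\theta r w_T}$, I must confirm that the running and terminal payoffs are square integrable under $P^{\bar{a}}$, which hinges on integrability assumptions on the admissible wage process $w$ and on the Radon--Nikodym density $\Gamma_T(\bar{a})$. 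Establishing those moment bounds, and thereby that $M_t$ is a genuine square-integrable martingale on the Brownian filtration generated by $W^{\bar{a}}$, is where the real work lies; everything after it is the routine It\^o bookkeeping sketched above.
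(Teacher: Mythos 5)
Your proposal is correct and follows essentially the same route as the paper: both identify $M_t = \int_0^t e^{-\rho s}u(w_s,a_s)\,ds + e^{-\rho t}v_t$ as a $P^{\bar a}$-martingale and invoke the (Fujisaki--Kallianpur--Kunita) martingale representation theorem to produce $\gamma_t^a$, with the same It\^o bookkeeping to recover the stated dynamics. The integrability issue you flag as the main obstacle is dispatched in the paper by observing that $u$ and $g$ are bounded on the admissible set (non-negative wages, $a\in[0,M]$, $t\in[0,T]$), so the martingale is square integrable.
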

\begin{proof} Recall for each action $a$ we have for that action $a$
\begin{align} 
v(a,t) &= \mathbb{E}_t^a\bigg[ \int_t^T e^{-\rho(s-t)}u(w_s,a_s)ds + e^{-\rho(T-t)}g(w_T) | \mathcal{F}_t^a \bigg]
\\ \nonumber
&= e^{\rho t}\mathbb{E}_t^a\bigg[ \int_0^T e^{-\rho s}u(w_s,a_s)ds + e^{-\rho(T-t)}g(w_T)) | \mathcal{F}_t^a \bigg] 
\\ \nonumber & \indeq
- e^{\rho t}\int_0^t e^{-\rho s} u(w_s,a_s)ds
\\ \nonumber
dv(a,t) &= \rho v_t dt + \gamma_t^a \sigma dW_t^a - u(w_t,a_t)dt,
\end{align}
where we appeal to the Martingale representation theorem by Fujisaki \cite{F72} for square integrable martingales. We note here that 
\begin{equation}
\mathbb{E}_t^a\bigg[ \int_0^T e^{-\rho(s-t)}u(w_s,a_s)ds + e^{-\rho(T-t)}g(w_T) | \mathcal{F}_t^a \bigg],
\end{equation}
is a square integrable martingale since the functions $g(\cdot), u(\cdot,\cdot)$ are bounded for $a\in [0,M]$ and $t \in [0,T]$.
\end{proof}
Next, we characterize the necessary and sufficient conditions to maximize the value function of the agent in Equation \eqref{eqn38}. Our result is analogous to Proposition 4 in \cite{PJ15}.  
\begin{Lemma} Maximizing the Hamiltonian defined as
\begin{equation}
H(t,y,a,A,\gamma) \triangleq u(w_t,a_t) + \gamma( \hat{\eta}(A_t,y) + a_t ) 
\end{equation}
is sufficient for the agent to maximize his value function as in Equation \eqref{eqn38}. Furthermore, it is necessary for the incentive compatibility parameter $\gamma_t^a$ to satisfy 
\begin{equation}
\gamma_t^a(a,w) = -u_a(w_t,a_t^*) + \frac{\sigma^{-2}}{h_t}p_t,
\end{equation}
where the term $p_t$ is as defined in Equation \eqref{eqn332}.

\end{Lemma}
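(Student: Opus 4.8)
The plan is to establish the two assertions separately: first the \emph{sufficiency} of pointwise Hamiltonian maximization by a verification (martingale) argument built on the decomposition in Theorem~\ref{hamilton}, and then the \emph{necessity} of the stated form of $\gamma_t^a$ by extracting the first-order condition of the stochastic maximum principle.

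For sufficiency, I would fix a recommended pair $(a^*,w)$ together with its continuation value $v_t$ and incentive parameter $\gamma_t$ furnished by \eqref{decomposition}, and let $a\in\mathcal A$ be an arbitrary admissible deviation. Introduce the discounted gain process
\[ G_t \triangleq \int_0^t e^{-\rho s}u(w_s,a_s)\,ds + e^{-\rho t}v_t . \]
Rewriting the recommended dynamics \eqref{decomposition} under $P^{a}$ by means of the change of Brownian motion $\sigma\,dW_t^{a^*}=\sigma\,dW_t^{a}+\big(f(t,\bar y,a_t)-f(t,\bar y,a^*_t)\big)\,dt$, a direct computation gives that the $P^{a}$-drift of $G_t$ equals $e^{-\rho t}\big[H(t,\bar y,a_t,A_t,\gamma_t)-H(t,\bar y,a^*_t,A^*_t,\gamma_t)\big]$. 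If $a^*$ maximizes the Hamiltonian, this drift is nonpositive, so $G_t$ is a $P^{a}$-supermartingale; evaluating at $t=0$ and $t=T$ and using $v_T=g(w_T)$ then yields $v(a^*,0)=G_0\ge \mathbb E^{a}[G_T]=v(a,0)$, with equality when $a=a^*$. The stochastic-integral term is a true martingale because $u$ and $g$ are bounded on $[0,M]\times[0,T]$ and $\gamma^a$ is square integrable, exactly as in the proof of Theorem~\ref{hamilton}.

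For necessity, I would invoke the stochastic maximum principle. Since the posterior $\hat\eta(y_t-A_t,t)$ depends on the cumulative effort $A_t=\int_0^t a_s\,ds$, I treat $A_t$ (with $dA_t=a_t\,dt$) as an auxiliary state, so that the control $a$ enters the Hamiltonian both \emph{directly}, through $u(w_t,a_t)$ and the $+a_t$ in the drift $f$, and \emph{indirectly}, through its effect on all future beliefs via $A_t$. Introducing the adjoint process $p_t$ of \eqref{eqn332} conjugate to this filtering channel and using the sensitivity $\partial_{A}\hat\eta=-\sigma^{-2}/h_t$ read off from \eqref{eqn22}, the stationarity condition $\partial_a H=0$ for the augmented Hamiltonian becomes $u_a(w_t,a^*_t)+\gamma_t-\frac{\sigma^{-2}}{h_t}p_t=0$, which is precisely $\gamma_t^a=-u_a(w_t,a^*_t)+\frac{\sigma^{-2}}{h_t}p_t$.

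The main obstacle is the filtering channel itself. Because $\hat\eta$ depends on cumulative effort, a change in the current action perturbs every future belief, so the drift comparison above is \emph{not} literally a pointwise-in-$a$ inequality at a frozen state: the cumulative efforts $A_t$ and $A^*_t$ differ along the two trajectories. Closing this gap requires either a concavity (Arrow-type) argument in the pair $(a,A)$, or showing that the belief-mismatch terms have zero $P^{a}$-mean by virtue of the martingale property $d\hat\eta=\frac{\sigma^{-1}}{h_t}dW_t^{\bar a}$ in \eqref{filt_mart}. The very same delayed dependence is what forces the adjoint correction $\frac{\sigma^{-2}}{h_t}p_t$ in the necessity part, and pinning down its sign and magnitude through the adjoint equation is the delicate step of the argument.
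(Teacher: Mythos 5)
Your proposal is correct and follows essentially the same route as the paper: the sufficiency half is the paper's Girsanov drift-comparison argument repackaged as a supermartingale statement for $G_t$, and the necessity half is the paper's explicit Gateaux-perturbation computation, in which the integration by parts producing $p_t = h_t\,\mathbb{E}\big[-\int_t^T \zeta_s h_s^{-1}\,ds\,\big|\,\mathcal{F}_t^a\big]$ is exactly your adjoint conjugate to the filtering channel with sensitivity $\partial_A\hat\eta=-\sigma^{-2}/h_t$. The obstacle you flag, that the drift comparison involves $A_t\neq A_t^*$, is genuine but closes in the way you suggest, since $\hat\eta$ is affine in $A$ and hence the Arrow-type concavity condition holds; the paper silently absorbs this mismatch into the Hamiltonian difference.
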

\begin{proof} By integrating Equation \eqref{decomposition} for the optimal action $\hat{a}$ and for any other action $\bar{a}$, we have 
\begin{align} 
e^{-\rho T}v(T) &= e^{-\rho T}g(w_T) = e^{-\rho t}v(t,\hat{a}) - \int_t^T e^{-\rho s}u(w_s,\hat{a}_s)ds + \int_t^T \hat{\zeta}_s\sigma dW_s^{\hat{a}}
\nonumber \\
e^{-\rho T}v(T) &= e^{-\rho T}g(w_T) = e^{-\rho t}v(t,\bar{a}) - \int_t^T e^{-\rho s}u(w_s,\bar{a}_s)ds + \int_t^T \bar{\zeta}_s\sigma dW_s^{\bar{a}}, 
\end{align}
where $\zeta_t \triangleq e^{-\rho t}\gamma_t$. Moreover, we have
\begin{align}
dy_t &= \sigma dW_t^0
\nonumber \\
dW_t^{\hat{a}} &= dW_t^0 - \frac{1}{\sigma}(\hat{\eta}(y_t - \hat{A}_t,t) + \hat{a}_t)dt
\nonumber \\
dW_t^{\bar{a}} &= dW_t^0 - \frac{1}{\sigma}(\hat{\eta}(y_t - \bar{A}_t,t) + \bar{a}_t)dt
\nonumber \\
dW_t^{\hat{a}} &= dW_t^{\bar{a}} + \frac{1}{\sigma} [ \tilde{\eta}(y_t - \bar{A},t) + \bar{a}_t - \tilde{\eta}(y_t - \hat{A},t) - \hat{a}_t ]dt
\end{align}
Hence, the following holds
\begin{align}
&v(t,\bar{a}) - v(t,\hat{a}) = e^{\rho t}\mathbb{E}_{\bar{a}}\bigg[ \int_t^T e^{-\rho(s-t)}[u(w_t,\bar{a}_t) - u(w_t,\hat{a}_t)]dt + 
\nonumber \\ &\indeq 
+ \int_t^T \hat{\gamma}_t \sigma dW_t^{\bar{a}} 
\nonumber \\ &\indeq 
+ \int_t^T \hat{\gamma}(\bar{a}_t - \hat{a}_t + \tilde{\eta}(y_t,\bar{A}_t) - \tilde{\eta}_t(y_t,\hat{A}_t))dt  \bigg]
\nonumber \\ &\indeq 
= e^{\rho t}\mathbb{E}_{\bar{a}}\bigg[ \int_t^T [H(\bar{a},\hat{\gamma}) - H(\hat{a},\hat{\gamma}) ] dt + \int_t^T \hat{\gamma}_t \sigma dW_t^{\bar{a}}
\nonumber \\ &\indeq 
\leq e^{\rho t}\mathbb{E}_{\bar{a}} \bigg[ \int_t^T \hat{\gamma}_t \sigma dW_t^{\bar{a}} \bigg] = 0.
\end{align}
The last term is a martingale due to square integrability of $\hat{\gamma}_t$ and $a \in [0,M]$ being bounded. Hence, we have proved the sufficient condition for the agent.
Next, we prove the necessary condition for the agent's value function in Equation $\eqref{eqn38}$.
\begin{align*}
&\tilde{a}_t^\epsilon  \triangleq a_t + \epsilon\Delta a_t
\\ \nonumber
&  \nabla v_t(a) \triangleq \lim_{\epsilon \rightarrow 0}\frac{v_t(\tilde{a}) - v_t(a)}{\epsilon} 
\end{align*}
and by small perturbation, we have 
\begin{align} 
&e^{-\rho t}\nabla v_t(a) = e^{-\rho t}\lim_{\epsilon \rightarrow 0} \frac{1}{\epsilon} \mathbb{E}_{\tilde{a}}\bigg[ \int_t^T e^{-\rho s}[ u(w,\tilde{a}^\epsilon) - u(w,a)]ds 
\nonumber \\ &\indeq
+ \int_t^T \hat{\zeta}_sdW_s^{\bar{a}^\epsilon} + \int_t^T \hat{\zeta}( \tilde{a}^\epsilon - a + \hat{\eta}(y_s,A_s^\epsilon) - \hat{\eta}(y_s,A_s) )ds
\bigg],
\end{align}
which gives the condition 
\begin{equation}
\mathbb{E}_{\tilde{a}}\bigg[ \int_t^T e^{-\rho s}u_a\Delta a_s + \zeta_s \bigg(\Delta a_s -  \frac{\sigma^{-2}}{h_s}\int_t^s \Delta a_r dr \bigg)  \bigg] \leq 0,
\end{equation}
Integrating by parts, we get that
\begin{equation*}
\mathbb{E}_{\tilde{a}}\bigg[ \int_t^T \bigg( e^{-\rho s}u_a + \zeta_s -  \int_s^T \zeta_r \frac{\sigma^{-2}}{h_r} dr \bigg) \Delta a_s  ds \bigg] \leq 0,
\end{equation*}
By noting that $\Delta a_s$ is arbitrary, we get 
\begin{equation}
\bigg( \mathbb{E}_t^a \bigg[ \int_t^T -\zeta_s \frac{\sigma^{-2}}{h_s} ds  \bigg] + \zeta_t + e^{-\rho s}u_a(w_s,a_s ) \bigg)( a_t - a^*_t) \leq 0
\end{equation}
By focusing only at time $t$ and using $\Delta a_t$ is arbitrary, we conclude that  for\\ $a_t \in [0,M]$, we have
\begin{equation}
\label{eqn332}
\zeta_t + e^{-\rho t}u_a(w_t,a^*_t) - \frac{\sigma^{-2}}{h_t}p_t \geq 0,
\end{equation}
where 
\begin{equation}
\label{eqn333}
p_t = h_t \mathbb{E}\bigg[ - \int_t^T \zeta_s \frac{1}{h_s}ds \bigg| \mathcal{F}_t^a \bigg].
\end{equation}
Since increasing $\gamma_t^a$ causes the volatility of the output to increase in Equation $\eqref{decomposition}$, the principal wants to minimize the incentive compatibility parameter $\gamma_t^a$. Hence, by multiplying the equation by $e^{\rho t}$ in Equation \eqref{eqn332}, we assume that the principal confines with 
\begin{equation}
\label{eqn334}
\gamma_t^a(a,w)= - u_a(w_t,a^*_t) + \frac{\sigma^{-2}}{h_t}p_t,
\end{equation} 
and conclude the result.
\end{proof}

We rewrite the term $p_t$ in Equation \eqref{eqn332} in a more tractable way as in \cite{PJ15} as follows. First, we denote 
\begin{equation*}
\tilde{p}_t \triangleq \frac{\sigma^{-2}}{h_t}p_t
\end{equation*}
Then, we have 
\begin{equation*}
\tilde{p}_t = \mathbb{E}\bigg[ -\int_t^T e^{-\rho(s-t)}\zeta_s \frac{\sigma^{-2}}{h_s}ds\bigg],
\end{equation*}
By differentiating with respect to time $t$, we get 
\begin{align*} 
\frac{d\tilde{p}_t}{dt} &= \rho \tilde{p}_t + \frac{\sigma^{-2}}{h_t}\gamma_t
\\ 
&= \rho \tilde{p}_t - \frac{\sigma^{-2}}{h_t}(u_a(w_t,a_t) + \tilde{p}_t ),
\end{align*}
Integrating this expression, we obtain 
\begin{equation}
\label{eqn335}
\tilde{p}_t = \frac{\sigma^{-2}}{h_t}\mathbb{E}_a\bigg[ \int_t^T e^{-\rho(s-t)}u_a(w_s,a_s) ds \bigg]
\end{equation}
\begin{Remark}
By the above derivation we see that $\gamma_t(w,a)$ is bounded by wage process $w_t$ being non-negative and $a \in [0,M]$. Furthermore, we also note that when there is no term $p_t$ in Equation \eqref{eqn334}, we have $\gamma_t = -u_a$, which corresponds to the first order condition with respect to $a$ of the term
\begin{equation}
\tilde{H}(t,y,a,\gamma) = u(w,a) + \gamma_t a_t,
\end{equation}
the Hamiltonian term without the filtering term in the model Equation \eqref{main_eqn}.
\end{Remark}
For the terminal date, we assume that from date $T$ on the unknown filtering term $\hat{\eta}$ is revealed, no more production takes place and both the principal and agent live off their assets for the infinite future, earning the same constant rate of return $r$. We assume both the principal and the agent solve the problem of the following form
\begin{align} 
\label{eqn336}
V_T(a_0) &= \max_{b_t} - \int_0^\infty \exp(-\rho t - \lambda \theta b_t)dt
\nonumber \\
&= \max_{b_t} - \int_T^\infty \exp(-\rho (t-T) - \lambda \theta b_t)dt,
\end{align}
with $c_0$ given and $dc_t = (rc_t - b_t)dt$. For the agent $b_t = w_t$ and $a_0 = w_T$. The Hamilton-Jacobi-Bellman (HJB) equation for \eqref{eqn336} reads as 
\begin{equation}
\rho V_T(a) = \max_b\{ -\exp(-\lambda \theta b) + V^\prime_T(c)[rc - b] \},
\end{equation}
whose solution is 
\begin{equation*}
    V_T(c) = -\exp \big( \frac{1-\rho}{r} - \lambda \theta r c \big),
\end{equation*}
with optimal 
\begin{equation*}
b(c) = \frac{\rho - 1}{\lambda \theta r} + rc.
\end{equation*}
Hence, we have for the terminal time $T$, the agent's and principal's terminal value function $v_T$ and $V_T^p$ as 
\begin{align}
\label{eqn100}
g(w_T) &= -\exp\big( \frac{1-\rho}{r} - \lambda \theta r w_T \big)
\\
v_T &= g(w_T) 
\\
V_T^p &= g(y_T - w_T).
\end{align}

\section{Principal's Problem}
From the principals point of view, the dynamics of the output follows
\begin{equation}
dy_t = (ry_t + \hat{\eta} + a_t  -d_t)dt + \sigma dW_t^a.
\end{equation}
We assume there is a common risk aversion $\lambda$ between the principal and the agent. The principal discounts at the same rate $\rho$ with the agent and has a flow utility 
\begin{equation}
U(d_t)= -\exp(-\lambda \theta d_t)
\end{equation}
over his consumption $d_t$ with the value function
\begin{equation}
\label{eqn442}
J(t,y,v,\hat{\eta}) = \max_{d,w,a} \mathbb{E}_t^a\bigg[ \int_t^T e^{-\rho(s-t)}U(d_s)ds + e^{-\rho(T-t)}V^p_T(y_T - w_T) | \mathcal{F}_t^a \bigg],
\end{equation}
where $V^p_T(y_T - w_T)$ is defined as follows.
\begin{align*}
V^p_T(y_T - w_T) &= -\exp\big( \frac{1-\rho}{r} - \lambda \theta r (y_T - w_T) \big)
\\
&= -\exp\big( \frac{1-\rho}{r} - \lambda \theta ry_T  \big) \exp( \lambda \theta r w_T ),
\end{align*}
Using the terminal value of the agent at time $T$, the principal value function at time $T$ reads as using Equation \eqref{eqn100}
\begin{align*}
    v_T &= -\exp \big( \frac{1-\rho}{r} - \lambda \theta r w_T \big)
    \\
    J(T,y_T,v_T) &= -\frac{\exp^2( \frac{1-\rho}{r} )}{v_T}\exp(-\lambda \theta r y_T)
\end{align*}
For convenience, we summarize the value function dynamics of the principal as follows:
\begin{align}
\label{eqn442}
d\hat{\eta} &= \frac{\sigma^{-1}}{h_t}dW_t^a \\
\hat{\eta}(y_t - A_t, t) &= \frac{h_0m_0 + \sigma^{-2}(y_t-A_t)}{h_t} \\
\eta(0,0) &= m_0 \\
dv_t &= [\rho v_t - u(w_t,a_t) ]dt + \sigma\gamma_t^a(a,w)dW_t^a \\
v_T &= -\exp\big( \frac{1-\rho}{r} \big)\exp(-\lambda \theta r w_T) \\
dy_t &= \big( r y_t + \hat{\eta} + a_t - d_t \big)dt + \sigma dW_t^a \\
\label{eqn448}
y_0 &= 0
\end{align}
We define the controlled value function for fixed admissible action process $a_t$ as
\begin{equation}
J^u(t,v,y,\hat{\eta}) = \mathbb{E}_{t,T}^a[\int_t^T e^{-\rho(s-t)}e^{-\lambda \theta d_s}ds + V_T^p(y_T - w_T)| \mathcal{F}_t^a]
\end{equation}
and the value function of the control problem given $(t,\hat{\eta},v,y) \in [0,T] \times \mathbb{R}^3$ as, 
\begin{equation}
\label{eqn450}
J(t,\hat{\eta},v,y) := \sup_{a(\cdot) \in \mathcal{A}}J^a(t,\hat{\eta},v,y)
\end{equation}
We next state our main theorem in this section and prove it in the subsection below, subsequently. 
\begin{Theorem}
\label{thm41}
Suppose that the principal and the agent with an unknown quality term have the value functions Equation \eqref{eqn442} and Equation \eqref{eqn38}, respectively, then a contract is implementable in the sense of Definition \ref{def21}, where both parties agree to recommend and give full effort for all times $0\leq t \leq T$.
\end{Theorem}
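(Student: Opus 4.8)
The plan is to reduce the statement to solving the principal's stochastic control problem \eqref{eqn450} by dynamic programming, and then to read off implementability from the agent's side using the incentive-compatibility analysis already established. First I would write down the Hamilton--Jacobi--Bellman equation attached to the dynamics \eqref{eqn442}--\eqref{eqn448}. Since the three state variables $(\hat\eta,v,y)$ are all driven by the single Brownian motion $W^a$, the generator carries a rank-one diffusion built from the vector $(\sigma^{-1}/h_t,\ \sigma\gamma_t^a,\ \sigma)$, and the HJB reads
\[
\rho J = \sup_{d,w,a}\Big\{ -e^{-\lambda\theta d} + J_t + J_v[\rho v - u(w,a)] + J_y[ry+\hat\eta+a-d] + \tfrac12\,\mathcal{Q}\big(J_{\hat\eta\hat\eta},J_{vv},J_{yy},J_{\hat\eta v},J_{\hat\eta y},J_{vy}\big) \Big\},
\]
where $\mathcal Q$ collects the second-order terms. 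The essential simplification is to substitute the incentive-compatibility relation \eqref{eqn334}, $\gamma_t^a = -u_a(w,a)+\tfrac{\sigma^{-2}}{h_t}p_t$, so that the volatility of $v$ — and hence the entire generator — becomes an explicit function of the contract controls $(w,a)$ rather than a free variable.

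Second, guided by the terminal data $J(T,\cdot)=\tfrac1v\,e^{2(1-\rho)/r}e^{-\lambda\theta r y}$ and by the CARA structure, I would posit the separable ansatz
\[
J(t,\hat\eta,v,y)=\frac{1}{v}\exp\Big(-\lambda\theta r\,y+\alpha(t)+\beta(t)\hat\eta+\tfrac12\kappa(t)\hat\eta^2\Big),
\]
with terminal conditions $\alpha(T)=2(1-\rho)/r$ and $\beta(T)=\kappa(T)=0$ (the filtering term being revealed at $T$). Plugging this in, the $1/v$ factor and the exponential in $y$ factor out of every derivative, so the supremum over $(d,w,a)$ becomes a pointwise finite-dimensional optimization: the first-order condition in $d$ gives $d^*$ from $\lambda\theta e^{-\lambda\theta d}=J_y$ (exactly as in the terminal-value computation), the condition in $w$ pins down $w^*$, and what survives is an ODE system for $(\alpha,\beta,\kappa)$, which closes precisely because $\hat\eta$ is a driftless Gaussian martingale with the known quadratic variation $\sigma^{-2}/h_t^2$ from \eqref{filt_mart}.

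The crux — and the step I expect to be the main obstacle — is establishing that the maximizer in $a$ is the boundary point $a=M$ for every $t\in[0,T]$. Here the direct output gain (the $+J_y\,a$ term, with $J_y>0$ under the ansatz) competes with the agent's effort disutility fed back through $-J_v\,u_a(w,a)$ and with the $a$-dependence of $\gamma_t^a$ buried inside $\mathcal Q$. I would show that, once the first-order conditions for $(d,w)$ are imposed, the reduced maximand is monotone increasing in $a$ on $[0,M]$; this is where the hypothesis of a common risk-aversion level $\lambda\theta$ and common $(\rho,r)$ is essential, since the symmetric CARA preferences make the effort transfer undistorted and keep the net marginal value of effort positive up to the cap $M$, forcing $a^*\equiv M$. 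With $a^*=M$ in hand, a standard verification argument — using that $u$, $g$ and the admissible controls are bounded for $a\in[0,M]$, so the candidate $J$ together with the stochastic integral terms are genuine martingales rather than mere local martingales — confirms that the ansatz is the value function and that $(d^*,w^*,a^*=M)$ is optimal for the principal.

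Finally, implementability in the sense of Definition \ref{def21} follows by returning to the agent. Given the contract that recommends $\bar a\equiv M$ together with the wage $\bar w=w^*$ and the induced sensitivity $\gamma_t^a$ of \eqref{eqn334}, the sufficiency half of the Hamiltonian lemma shows that $\bar a$ maximizes the agent's Hamiltonian $H(t,y,a,A,\gamma)=u(w,a)+\gamma(\hat\eta+a)$ pointwise. Since $H$ is concave in $a$ (because $u_{aa}=(\theta\lambda)^2u<0$), this reduces to verifying the boundary sign condition $u_a(\bar w,M)+\gamma\ge 0$, equivalently a sign condition on the costate $p_t$ of \eqref{eqn333}, which is guaranteed by the monotonicity used above. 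Combined with the agent's individual rationality at $t=0$ — the contract delivers his continuation value $v_0$ — this yields $(a^*,w^*)=(\bar a,\bar w)$, so the contract is implementable with full effort throughout $[0,T]$, as claimed.
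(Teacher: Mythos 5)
Your overall strategy---guess a value function of the multiplicative form $J=v^{-1}e^{(\cdot)}e^{-\lambda\theta r y}$, substitute the incentive-compatibility relation \eqref{eqn334} so the volatility of $v$ becomes a function of $(w,a)$, optimize the HJB pointwise over $(d,w,a)$, and close with a verification argument plus the sufficiency half of the agent's Hamiltonian lemma---is the same as the paper's. However, the step you yourself identify as the crux, namely that the maximizer in $a$ is the corner $a=M$ for all $t$, is only announced (``I would show that the reduced maximand is monotone increasing in $a$''), and the heuristic you offer (symmetric CARA preferences make the effort transfer undistorted) is not the operative mechanism. The paper's Lemma \ref{guessLem} obtains the corner from a concrete structural fact about $u(w,a)=-e^{-\theta w+\theta\lambda a}$: the $a$-derivatives and $w$-derivatives of $u$ and of $\gamma$ are proportional with a factor involving $\lambda\in(0,1)$ (the paper uses $u_a=-\lambda u_w$ and $\partial_w\gamma=-\lambda\,\partial_a\gamma$), while the linear output term contributes $+J_y$ to the $a$-condition and $-J_y$ to the $w$-condition with $J_y>0$ (since $v<0$). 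Because $0<\lambda<1$, the two first-order conditions cannot bind simultaneously: the $w$-condition binds, the $a$-derivative of the maximand stays one-signed, and $a=0$ is excluded because the right derivative there is positive, forcing $a^*\equiv M$. Without this proportionality computation (or an equivalent one) your argument has a hole exactly at the point that produces the theorem's conclusion.

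Two further issues. First, your refinement of the ansatz to $\exp\bigl(\alpha(t)+\beta(t)\hat\eta+\tfrac12\kappa(t)\hat\eta^2\bigr)$ is stronger than what the paper can support: the exponent $g(t,\hat\eta)$ is coupled to $k(t,\hat\eta)$ through \eqref{guess_HJB} and \eqref{k_eqn}, which involve $\log(-k)$ and products of $k$ with $\varphi(k)$, so the system does not reduce to a closed finite ODE system in $(\alpha,\beta,\kappa)$; the paper only asserts a Feynman--Kac representation and determines $g$ and $k$ implicitly/numerically. Second, your final reduction of the agent's boundary condition to $u_a(\bar w,M)+\gamma\ge 0$, equivalently a sign condition on $p_t$, is not ``guaranteed by the monotonicity used above'': by \eqref{eqn335} the sign of $p_t$ is that of $\mathbb{E}\bigl[\int_t^T e^{-\rho(s-t)}u_a\,ds\bigr]$ with $u_a=\theta\lambda u<0$, so this check does not follow automatically and would need a separate careful argument; the paper itself handles implementability through the choice \eqref{eqn334} of $\gamma^a_t$ rather than through the sign condition you propose.
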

\subsection{Proof of Theorem \ref{thm41}}
To prove Theorem \ref{thm41}, we guess an explicit solution for the value function in Equation \eqref{eqn450} and verify our guess subsequently. 
Next, we guess that for $0 \leq t \leq T$ the value function of the principal is a $\mathcal{C}^{1,2}$ function of the form
\begin{equation}
J(t,y,v,\hat{\eta}) = \frac{e^{g(t,\hat{\eta})}}{v}\exp(-\lambda \theta r y)
\end{equation} and verify its validity below.
We further guess that for the optimal action process $a^*_t$
\begin{equation}
e^{-\theta w + \theta \lambda a^*_t} = k(t,\hat{\eta})v
\end{equation}

Then by Theorem \ref{hamilton}, we have that
\begin{align}
dp_t &= \theta\lambda dv_t
\\
p_t &= \theta \lambda \mathbb{E}^a\bigg[ \int_t^T e^{-\rho(s-t)}u ds \bigg]
\\
p_t &= \theta \lambda [1 - e^{\int_t^T (\rho - k(s,\eta))ds}]v_t
\\
\varphi_t(k) &= 1 - e^{\int_t^T (\rho - k(s,\eta))ds}
\end{align} 
Furthermore, using our guess for the value function, we obviously have
\begin{align*} 
J_y &= -\lambda \theta r J \\
J_{yy} &= \lambda^2 \theta^2 r^2 J \\
J_v &= -\frac{1}{v}J \\
J_{yv} &= \lambda \theta r \frac{1}{v} J \\
J_{vv} &= \frac{2}{v^2}J
\end{align*}
Following our guess for the value function being in $\mathcal{C}^{1,2}$, the Hamilton-Jacobi-Bellman (HJB) equation is of the form
\begin{align}
\label{eqn102}
\rho J - J_t &= \max_{w,d,a} \bigg\{  -\exp(-\lambda \theta d) + J_y [ry + \eta + a - w - d]
\nonumber \\&\indeq 
+ J_v [ \rho v + e^{-\theta w + \lambda \theta a} ]
\nonumber \\&\indeq
+ \frac{1}{2}J_{yy}\sigma^2 + \frac{1}{2}J_{vv}\sigma^2\gamma_t^2
\nonumber \\&\indeq
+ \frac{1}{2}J_{\eta\eta}\frac{\sigma^{-2}}{h_t} + J_{y\eta}\frac{1}{h_t}
\nonumber \\&\indeq
+ J_{yv}\sigma^2\gamma_t + J_{v\eta}\frac{\gamma_t}{h_t}
\bigg \}
\end{align}
Next, we show that our guess value function necessitates that the principal advises full action, namely $a^*_t \equiv M$ for $0 \leq t \leq T$.
\begin{Lemma}\label{guessLem} The recommended action is $a^* = M$, namely the right corner is optimal for the principal.
\end{Lemma}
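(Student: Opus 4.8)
The plan is to insert the conjectured value function $J(t,y,v,\hat\eta)=e^{g(t,\hat\eta)}v^{-1}\exp(-\lambda\theta r y)$, together with the partial derivatives listed just before \eqref{eqn102} (and $J_\eta=g_\eta J$, $J_{v\eta}=-g_\eta J/v$), into the right-hand side $\mathcal H$ of the HJB equation \eqref{eqn102}, and then to read off the maximizer in $a$. Before differentiating I would fix signs: since the agent's continuation value is a negative exponential we have $v<0$, while $e^{g}>0$, so $J<0$; consequently $J_y=-\lambda\theta r J>0$, whereas $J_v=-J/v<0$ and $J_{vv}=2J/v^2<0$. These signs already encode the economic tension flagged in the Remark after \eqref{eqn335}: the drift contribution rewards effort, while the diffusion contribution penalises the volatility that effort induces through $\gamma_t$.

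Next I would localise the dependence of $\mathcal H$ on the recommended action. Writing $E\triangleq e^{-\theta w+\lambda\theta a}$, the action $a$ enters $\mathcal H$ in exactly three ways: (i) linearly through the drift term $J_y\,[ry+\eta+a-w-d]$; (ii) through the running term $J_v\,E$; and (iii) through the incentive-compatibility parameter, which by the preceding Lemma is pinned to the action via $\gamma_t=-u_a(w,a)+\sigma^{-2}p_t/h_t$ with $-u_a=\lambda\theta E$, so that $\gamma_t$ carries $a$ into the three terms $\tfrac12 J_{vv}\sigma^2\gamma_t^2$, $J_{yv}\sigma^2\gamma_t$ and $J_{v\eta}\gamma_t/h_t$ (the purely filtering terms $\tfrac12 J_{\eta\eta}\sigma^{-2}/h_t$ and $J_{y\eta}/h_t$ are independent of $a$ and drop out). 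Using $\partial_a E=\lambda\theta E$ and $\partial_a\gamma_t=-u_{aa}=(\lambda\theta)^2E$, I would compute $\partial_a\mathcal H$, substitute the homogeneity relations above, and factor out $J$, so that $\partial_a\mathcal H=J\cdot B(a)$ for an explicit scalar $B(a)$ depending only on $E$, $v$, $h_t$, $r$ and $g_\eta$.

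The decisive step is then to show that $B(a)$ keeps one and the same sign for every $a\in[0,M]$; combined with $J<0$ this makes $\partial_a\mathcal H$ single-signed, so $\mathcal H$ is monotone in $a$ on the compact interval and its maximum is attained at the right endpoint, giving $a^*=M$. I expect this sign check to be the main obstacle, because $B(a)$ superposes the constant positive benefit coming from $J_y$ against a contribution that is quadratic in $E$ (hence in the effort-driven volatility) and of the opposite sign; the argument must therefore exploit the precise cancellations produced by the homogeneity relations, and possibly the first-order conditions in $w$ and $d$, to confirm that the linear output gain dominates the volatility penalty throughout $[0,M]$ rather than merely at isolated points. Once $B(a)<0$ (equivalently $\partial_a\mathcal H>0$) is established on $[0,M]$, the corner solution $a^*\equiv M$ for all $0\le t\le T$ follows immediately, which is the assertion of the Lemma.
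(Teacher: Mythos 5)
Your overall strategy --- substitute the guessed $J$, isolate the $a$-dependence of the Hamiltonian through $E=e^{-\theta w+\lambda\theta a}$ and $\gamma_t$, and show $\partial_a\mathcal H$ is single-signed so the maximum sits at the right endpoint --- is exactly the route the paper takes. But your proposal stops at the one step that actually proves the lemma: you write that the sign check on $B(a)$ is ``the main obstacle'' and that the argument ``must'' exploit cancellations and ``possibly'' the first-order condition in $w$, without carrying this out. As written, the proposal does not establish that the linear gain $J_y$ dominates the quadratic-in-$E$ volatility penalty on all of $[0,M]$, so the conclusion is not reached.

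The missing idea is concrete and short. Because $u(w,a)=-e^{-\theta w+\lambda\theta a}$ depends on $(w,a)$ only through $-\theta w+\lambda\theta a$, every $a$-dependent term in $\mathcal H$ other than the drift term $J_y\,[ry+\eta+a-w-d]$ satisfies $\partial_a(\cdot)=-\lambda\,\partial_w(\cdot)$ (this applies to $J_vE$ and to all three $\gamma_t$-terms, since $\partial_aE=-\lambda\,\partial_wE$ and $\gamma_a=-\lambda\gamma_w$). Meanwhile the drift term contributes $+J_y$ to $\partial_a\mathcal H$ and $-J_y$ to $\partial_w\mathcal H$. Hence at the interior optimum in $w$, where $\partial_w\mathcal H=0$,
\begin{equation*}
\partial_a\mathcal H \;=\; J_y \;-\;\lambda\bigl(\partial_w\mathcal H + J_y\bigr)\;=\;(1-\lambda)\,J_y\;>\;0,
\end{equation*}
since $v<0$ makes $J<0$ and therefore $J_y=-\lambda\theta rJ>0$, and $\lambda\in(0,1)$. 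This is uniform in $a$, so the first-order condition in $a$ never binds, $a=0$ is excluded because the right derivative there is strictly positive, and $a^*=M$ follows. This is precisely the paper's argument (phrased there as ``the first order condition for $w$ binds, whereas the first order condition for $a$ does not bind''); your proposal correctly guessed its ingredients but did not assemble them.
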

\begin{proof} By writing the first order condition for wage $w$ and action $a$ from the HJB, we have the following pair of equations:
\begin{align} 
\label{eqn476}
-J_y + J_v[-\theta e^{-\theta w + \lambda\theta a }] + \frac{1}{2}J_{vv}\sigma^2 2 \gamma \gamma_w + J_{yv}\sigma^2 \gamma_w + J_{v\eta} \frac{\gamma_w}{h_t} &= 0
\\
\label{eqn477}
J_y + J_v[ \lambda \theta e^{-\theta w + \lambda\theta a } ] + \frac{1}{2}J_{aa}\sigma^2 2 \gamma \gamma_a + J_{yv}\sigma^2 \gamma_a + J_{v\eta} \frac{\gamma_a}{h_t} &= 0
\end{align}
Then, our guess for the value function 
\begin{align}
J &= e^{g(t,\eta)}\frac{e^{-\lambda \theta r y}}{v}
\\
J_y &= -\lambda \theta r e^{g(t,\eta)}\frac{e^{-\lambda \theta r y}}{v},
\end{align}
we see that $J_y$ is positive, since $v$ is negative.By the relation above,\\ $\frac{\partial \gamma}{\partial w} = -\lambda\frac{\partial \gamma}{\partial a} > -\frac{\partial \gamma}{\partial a}$. Hence, by noting $0<\lambda <1$ and by derivatives of the exponential function with respect to $a$ and $w$, the first order condition for $w$ binds, whereas the first order condition for $a$ does not bind. So we have either $a=0$ or $a=M$ as optimal actions. But for $a=0$ to be optimal, the right derivative should be less than or equal to 0 at $a= 0$, but this can not be the case due to first order condition for $a$ and $w$ above. Similarly, the right corner's left derivative is positive, whenever the first order condition for $w$ binds, hence  optimal action $a^*_t = M$ for all $t \in [0,T]$.
\end{proof}

Using our guesses for the utility function and the value function and suppressing the arguments of the functions for simplicity below, we rewrite the HJB equation as follows 
\begin{align*}
\rho J - J_t &= \max_{w,d}\bigg\{ -e^{-\lambda \theta d} -\lambda \theta r J [ ry + \eta + \frac{\log(kv)}{\lambda \theta} + \frac{\log(\lambda \theta r e^{g(t,\eta)})}{\lambda \theta} -\frac{\log(-v)}{\lambda \theta} - ry ]
\nonumber \\&\indeq
-\frac{1}{v}J[\rho v + k v] + \frac{1}{2}\lambda^2 \theta^2 r^2 J \sigma^2 + \frac{1}{2}\frac{2}{v^2}\sigma^2 \theta^2 \lambda^2 \big( k + \frac{\sigma^{-2}}{h_t}\varphi \big)^2v^2 
\nonumber \\&\indeq
\frac{1}{2}J_{\eta\eta}\frac{\sigma^{-2}}{h_t} - \lambda \theta r J_{\eta}\frac{1}{h_t}
\nonumber \\&\indeq
+\lambda \theta r \frac{1}{v}J\sigma^2\lambda \theta [k + \frac{\sigma^{-2}}{h_t}\varphi]v - \frac{1}{v}J_{\eta}\frac{1}{h_t}\lambda \theta v [k + \frac{\sigma^{-2}}{h_t}\varphi]
\bigg\} 
\end{align*}
By cancelling the terms and by first order condition on $d$, i.e. $e^{-\lambda \theta d} = -rJ$, we get
\begin{align} 
\label{eqn477}
\rho J - J_t &= \bigg\{ rJ - \lambda \theta J [\eta + \frac{\log (-k)}{\lambda \theta} + \frac{\log(\lambda \theta r e^{g(t,\eta)})}{\lambda \theta}]
\nonumber \\&\indeq 
-J[\rho + k] + \sigma^2\theta^2\lambda^2[k + \frac{\sigma^{-2}}{h_t}\varphi]^2J
\nonumber \\&\indeq
+\frac{1}{2}J_{\eta\eta}\frac{\sigma^{-2}}{h_t} - \lambda \theta r J_\eta\frac{1}{h_t}  
\nonumber \\&\indeq 
+ \lambda^2 \theta^2 \sigma^2 r J [k + \frac{\sigma^{-2}}{h_t}\varphi] - \lambda \theta  \frac{1}{h_t}[k + \frac{\sigma^{-2}}{h_t}\varphi]J_\eta 
\bigg\}
\end{align} 
By our guess for the value function, we have
\begin{align*}
J &= \frac{e^{g(t,\eta)}}{v}e^{-\lambda \theta r y}
\\
J_t &= g_t J
\\
J_{\eta} &= g_\eta J
\end{align*}
Hence, the HJB Equation \eqref{eqn477} reads as 
\begin{align}
\label{guess_HJB}
\rho - g_t &=  r - \lambda \theta \bigg[\eta + \frac{\log (-k)}{\lambda \theta} + \frac{\log(\lambda \theta r)}{\lambda \theta} + \frac{g(t,\eta)}{\lambda \theta} \bigg]
\nonumber \\&\indeq 
-[\rho + k] + \sigma^2\theta^2\lambda^2[k + \frac{\sigma^{-2}}{h_t}\varphi(k)]^2
\nonumber \\&\indeq
+\frac{1}{2}g_{\eta\eta}\frac{\sigma^{-2}}{h_t} - \lambda \theta r g_\eta\frac{1}{h_t}  
\nonumber \\&\indeq 
+ \lambda^2 \theta^2 \sigma^2 r  [k + \frac{\sigma^{-2}}{h_t}\varphi(k)] - \lambda \theta  \frac{1}{h_t}[k + \frac{\sigma^{-2}}{h_t}\varphi(k)]g_\eta 
\end{align}
with the terminal condition $g(T,\hat{\eta}) = e^{2(\frac{1-\rho}{r})}$ for all $\hat{\eta} \in \mathbb{R}$.
\newline\newline
For ease of notation, we introduce the following expressions. 
\begin{align}
K_1(t,\hat{\eta}) &= -\rho + r - \lambda \theta \hat{\eta} - \log(-k(t,\hat{\eta})) - \log(\lambda \theta r) - \rho -k(t,\eta) \\
&+ \sigma^2\theta^2\lambda^2[k(t,\hat{\eta}) + \frac{\sigma^{-2}}{h_t}\varphi(k(t,\hat{\eta}))]^2 + \lambda^2\theta^2\sigma^2r [k(t,\hat{\eta})] \\
K_2(t,\hat{\eta}) &= -\lambda \theta r \frac{1}{h_t} - \lambda \theta\frac{1}{h_t}[k(t,\hat{\eta}) + \frac{\sigma^{-2}}{h_t}\varphi(k(t,\hat{\eta}))]\\
K_3(t) &= \frac{\sigma^{-1}}{h_t}
\end{align}
By Feynman-Kac, the existence and uniqueness of the PDE above is guaranteed as 
\begin{equation}
g(t,\hat{\eta}) = \mathbb{E}_t^a\bigg[ \int_t^T e^{-(T-s)}K_1(s,\hat{\eta}_s)ds + e^{-(T-t)}e^{2\frac{1-\rho}{T}} | \mathcal{F}_t^a\bigg],
\end{equation}
under the action $a$ such that $\hat{\eta}$ is an Ito process driven by the equation \begin{equation}
d\hat{\eta} = K_2(t,\hat{\eta})dt + K_3(t,\hat{\eta}_t)dW_t^a
\end{equation} 
Moreover, using our guesses for the value function and utility function, we rewrite the first order condition for $w$ as 
\begin{align}
\label{eqn101}
&\lambda \theta r J - \frac{1}{v}[-\theta kv]J + \frac{1}{2}\frac{2}{v^2}J\sigma^2 2 \theta \lambda v [ k + \frac{\sigma^{-2}}{h_t}\varphi](-\theta)^2\lambda^2kv 
\\ & \indeq
+\lambda \theta r \frac{1}{v}J\sigma^2\theta\lambda v [k + \frac{\sigma^{-2}}{h_t}\varphi] + g_{\eta}\frac{1}{v}J\theta^2\lambda^2\frac{kv}{h_t} = 0
\end{align}
Hence by cancelling $v$ from the equation and dividing by $J$, the Equation \eqref{eqn101} reads as
\begin{align}
\label{k_eqn}
&\lambda \theta r + \theta k - \sigma^2 2 \theta \lambda[k + \frac{\sigma^{-2}}{h_t}\varphi]\theta^2\lambda^2k
\\& \indeq
+ \lambda \theta r \sigma^2\theta \lambda [k + \frac{\sigma^{-2}}{h_t}\varphi] + g_\eta\theta^2 \lambda^2\frac{k}{h_t} = 0.
\end{align}
Equation \eqref{guess_HJB} and Equation \eqref{k_eqn}, $k(t,\hat{\eta})$ and $g(t,\hat{\eta})$ are implicitly defined and can be found numerically. 
\subsection{Verification Theorem}
By the discussion above, we have the following converse relation. Our guess 
\begin{equation}
\label{eqn476}
J(t,y,v,\hat{\eta}) = \frac{e^{g(t,\hat{\eta})}}{v}\exp(-\lambda \theta r y)
\end{equation}
is a $\mathcal{C}^{1,2}$ function. It also satisfies the HJB equation \eqref{eqn102}
\begin{align*}
\rho J - J_t &= \max_{w,d,a} \big\{  -\exp(-\lambda \theta d) + J_y [ry + \eta + a - w - d]
\nonumber \\&\indeq 
+ J_v [ \rho v + e^{-\theta w + \lambda \theta a} ]
\nonumber \\&\indeq
+ \frac{1}{2}J_{yy}\sigma^2 + \frac{1}{2}J_{vv}\sigma^2\gamma_t^2
\nonumber \\&\indeq
+ \frac{1}{2}J_{\eta\eta}\frac{\sigma^{-2}}{h_t} + J_{y\eta}\frac{1}{h_t}
\nonumber \\&\indeq
+ J_{yv}\sigma^2\gamma_t + J_{v\eta}\frac{\gamma_t}{h_t}
\big \}
\end{align*} with boundary condition 
\begin{equation}
 J(T,y_T,v_T) = -\frac{\exp^2( \frac{1-\rho}{r} )}{v_T}\exp(-\lambda \theta r y_T).
\end{equation} 
Using our guesses for the utility function we also have by the discussion above for optimal action $a^*$
\begin{align*}
e^{-\theta w + \theta \lambda a^*} &= k(t,\eta) v
\\
-\lambda \theta w + \lambda \theta a^* &= \log (kv)
\\
-\lambda \theta w &= \log (kv) - \lambda \theta a^* 
\\
-w &= \frac{\log kv}{\lambda \theta} - M
\\
a^* - w = M - w &= \frac{\log(kv)}{\lambda \theta}
\\
-e^{-\lambda \theta d} &= \lambda \theta r J
\\ 
e^{-\lambda \theta d} &= -r \frac{e^{g(t,\hat{\eta})}}{v}e^{-\lambda \theta r y}
\\
-\lambda \theta d &= \log (rg(t,\hat{\eta})) - \log(-v) - \lambda \theta r y 
\\
-d &= \frac{\log(rj^1)}{\lambda\theta} - \frac{\log(-v)}{\lambda \theta} - ry
\end{align*}
Hence, for each fixed $(t,y,v,\hat{\eta})$
the expression 
\begin{align}
&\max_{w,d,a} \big\{  -\exp(-\lambda \theta d) + J_y [ry + \eta + a - w - d]
\nonumber \\&\indeq 
+ J_v [ \rho v + e^{-\theta w + \lambda \theta a} ]
\nonumber \\&\indeq
+ \frac{1}{2}J_{yy}\sigma^2 + \frac{1}{2}J_{vv}\sigma^2\gamma_t^2
\nonumber \\&\indeq
+ \frac{1}{2}J_{\eta\eta}\frac{\sigma^{-2}}{h_t} + J_{y\eta}\frac{1}{h_t}
\nonumber \\&\indeq
\label{eqn478}
+ J_{yv}\sigma^2\gamma_t + J_{v\eta}\frac{\gamma_t}{h_t}
\big \}
\end{align} attains its maximum $(a^*,w^*,d^*)$ at 
\begin{align}
\label{eqn480}
a^* &= M \\
\label{eqn481}
w^* &= M - \frac{\log k(t,\hat{\eta}^{a^*})v}{\lambda \theta} \\
\label{eqn482}
d^* &= yr - \frac{\log(-v)}{\lambda \theta} - \frac{\log(r)}{\lambda \theta} - \frac{g(t,\hat{\eta}^{a^*})}{\lambda \theta},
\end{align} where the functions $g(t,\hat{\eta})$ and $k(t,\hat{\eta})$ are determined by Equation $\eqref{k_eqn}$ and Equation $\eqref{guess_HJB}$. Hence, Equation \eqref{eqn476} is indeed the solution of the HJB Equation \eqref{eqn478} and we conclude the verification theorem. 

\section{Discussion and Conclusion}
In this paper, we have studied a principal-agent problem with moral hazard. Contrary to the mainstream, where the principal is assumed to be risk-neutral, we have assumed that both the principal and the agent have exponential utility as in Equation \eqref{eqn25}, and they are risk-averse with the same risk awareness level $\lambda$ as in \cite{N14}. We take also an unknown endogenous learning term representing the unknown quality of the agent into account, which is revealed as time passes by. We see that both parties agree on a contract, where the agent gives full effort from beginning until the finite horizon $T$. Full effort of the agent in the optimal contract is also observed in \cite{PJ15}, where the authors assumed the principal to be risk-neutral. Hence, we see that the risk-awareness level of the principal does not affect a role in the agent's actual effort in the contract, but only the agent's utility is determinant in that respect. The optimality of the right corner of the admissible action interval of the agent as to be seen in Lemma \ref{guessLem} is due to the specific nature of the utility function chosen in Equation \eqref{eqn25}. Changing the utility function of the agent would cause the arguments in Lemma \ref{guessLem} would not hold anymore. In that case, one usually \textit{assumes} that the optimal effort is in the interior of the effort interval. On the other hand, we see that the payments to the agents in terms of wages and dividends are affected by the posterior belief on the unknown quality of the agent $\hat{\eta}$, as well as on the risk-awareness $\lambda$ of the principal as to be seen in the Equations \eqref{eqn480}, \eqref{eqn481} and \eqref{eqn482}. However, we can not conclude that there is a direct negative effect on the payment to the agent due to the nonlinear nature of the parameters. We further note that since the uncertainty on quality of the agent decreases as to be seen in Equation \eqref{eqn22}, its effect on the dynamics of the problem decreases as time $t$ passes.

\end{document}